\newtheorem{theorem}{Theorem}[section]
\newtheorem{corollary}[theorem]{Corollary}	
\theoremstyle{definition}
\newtheorem{definition}[theorem]{Definition}
\theoremstyle{remark}
\numberwithin{equation}{section}
\newcommand{\GF}{\operatorname{GF}}
\newcommand{\period}{\operatorname{period}}
\newcommand{\Ztwo}{{\mathbb Z}_2}
\newcommand{\lcm}{\operatorname{lcm}}
\newcommand{\rpb}{{R. P. Brent}}
\newcommand{\realtilde}{\makebox{\tt\char'176}}	  
\newcommand{\lbrk}{{\linebreak[0]}}
\newcommand{\MC}{{Math.\ Comp.}}
\newcommand{\LNCS}{{Lecture Notes in Computer Science}}
\newcommand{\etal}{{\em{et~al.}}}
\newcommand{\ie}{{\em{i.e.}}}
\newcommand{\eg}{{\em{e.g.}}}
\newcommand{\smod}{{\,\bmod\,}}		
\newcommand{\mmod}{{\;\bmod\;}}		
\newcommand{\delhat}{{\widehat{\delta}}}
\newcommand{\dhat}{{\widehat{d}}}
\newcommand{\khat}{{\widehat{k}}}
\newcommand{\nhat}{{\widehat{n}}}
\newcommand{\shat}{{\widehat{s}}}
\begin{document}

\title[Almost Irreducible and Almost Primitive Trinomials]
	{Algorithms for Finding Almost Irreducible\\ 
	 and Almost Primitive Trinomials}

\author{Richard P.\ Brent}
\address{Oxford University Computing Laboratory,\\
Wolfson Building, Parks Road,\\
Oxford OX1~3QD, UK}

\author{Paul Zimmermann}
\address{LORIA/INRIA Lorraine\\
615 rue du jardin botanique\\
BP 101, 54602 Villers-l\`es-Nancy, France}

\subjclass{Primary 
        11B83,                  
        11Y16;                  
        Secondary 
        11-04,                  
        11K35,                  
        11N35,                  
        11R09,                  
        11T06,                  
        12-04,                  
	12Y05,			
        68Q25                  	
}

\date{17 April 2003}		

\dedicatory{\vspace*{-10pt}	
Dedicated to Hugh Cowie Williams on the occasion of his 60th birthday.}

\begin{abstract}
Consider polynomials over $\GF(2)$.
We describe efficient 
algorithms for finding trinomials with 
large irreducible (and possibly primitive) factors, and give examples
of trinomials 
having a primitive factor of degree~$r$ for all Mersenne exponents
$r = \pm 3 \mmod 8$ in the range $5 < r < 10^7$,
although there is no irreducible trinomial of degree~$r$.
We also give 
trinomials with a primitive factor of degree $r = 2^k$
for $3 \le k \le 12$.  These trinomials enable efficient 
representations of the finite field $\GF(2^r)$.
We show how trinomials with large primitive factors can 
be used efficiently in applications where primitive trinomials
would normally be used.
\vspace*{-10pt} 
\end{abstract}

\maketitle

\section{Introduction}
\label{sec:intro}

Irreducible and primitive polynomials over finite fields have many
applications in cryptography, coding theory, random number generation, etc.
See, for example, \cite{Gathen,Golomb,Knuth,Lidl94,Menezes}.

For simplicity we restrict our attention to the finite field
$\Ztwo = \GF(2)$; the generalization to other finite fields is straightforward. 
All polynomials are assumed to be in $\Ztwo[x]$,
and computations on polynomials are performed in $\Ztwo[x]$ or in 
a specified quotient ring.
A polynomial $P(x) \in \Ztwo[x]$ may be written as $P$
if the argument $x$ is clear from the context.
We recall some standard definitions.
\begin{definition}
\label{def:period}
A polynomial $P(x)$ with $P(0) \ne 0$ 
has {\em period} $\rho$ if $\rho$ is the least positive
integer such that $x^\rho = 1 \mmod P(x)$.
We say that $x$ has order $\rho \mmod P(x)$.
\end{definition}
\begin{definition}
\label{def:reducible}
A polynomial $P(x)$ is {\em reducible} if it has
nontrivial factors; otherwise it is \hbox{\em irreducible}.
\end{definition}
\begin{definition}
\label{def:primitive}
A polynomial $P(x)$ of degree~$n > 0$ 	  
is {\em primitive} if $P(x)$ is irreducible and has period $2^n-1$.
(Recall our assumption that $P(x) \in \Ztwo[x]$.)
\end{definition}
If $P(x)$ is primitive, then $x$ is a generator for the multiplicative
group of the field $\Ztwo[x]/P(x)$, giving a concrete representation
of $\GF(2^n)$. See Lidl and Niederreiter~\cite{Lidl94} or
Menezes~{\etal}~\cite{Menezes} for background information.

There is an interest in discovering primitive polynomials of high degree~$n$
for applications in random number generation~\cite{rpb132,rpb211}
and cryptography~\cite{Menezes}.
In such applications it is often desirable to use primitive polynomials with
a small number of nonzero terms, {\ie}~a small weight.
In particular, we are interested in
{\em trinomials} of the form $x^n + x^s + 1$, where $n > s > 0$
(so there are exactly three nonzero terms).

If $P(x)$ is irreducible and $\deg(P) = n > 1$,
then the order of $x$ in $\Ztwo[x]/P(x)$ is
a divisor of $2^n-1$. To test if $P(x)$ is primitive, we must test if
the order of $x$ is exactly $2^n-1$. To do this efficiently\footnote{%
Here ``efficiently'' means in time polynomial in the degree~$n$.}
it appears 
that we need to know the complete prime factorization of $2^n-1$. 
At the time of writing 
these factorizations are known for 
$n < 713$ 		
and certain larger~$n$, see~\cite{Cunningham}.

We say that $n$ is a {\em Mersenne exponent\ } if $2^n-1$ is
prime. In this case the factorization of $2^n-1$ is trivial and an
irreducible polynomial of degree~$n$ is necessarily primitive.
Large Mersenne exponents are known~\cite{GIMPS},
so there is a possibility
of finding primitive trinomials of high degree.
To test if a trinomial of 
prime 
degree~$n$ is reducible takes time $O(n^2)$,
so to test all trinomials of degree~$n$ takes time $O(n^3)$.

Several authors~\cite{Her92,Kumada,Kur91,Zie69b}
have computed primitive trinomials 
whose degree is a Mersenne exponent, up to some bound
imposed by the computing resources available.
Recently Brent, Larvala and Zimmermann~\cite{rpb199} gave a
new algorithm, more efficient than those used previously,
and computed all the primitive trinomials
of Mersenne exponent $n \le 3021377$ 
(subsequently extended to $n \le 6972593$).

For some $n \ge 2$, irreducible trinomials of given degree~$n$ do not exist.
Swan's theorem (see \S\ref{sec:Swan}) rules out $n = 0 \mmod 8$
and also most $n = \pm 3 \mmod 8$.  Since about half of the known
Mersenne exponents are $\pm 3 \mmod 8$, we can only hope to find
primitive trinomials of degree~$n$ for about half the Mersenne
exponents $n$.  

In the cases where primitive trinomials are ruled out by Swan's theorem, 
the conventional approach is to use primitive polynomials with more than three
nonzero terms.  A polynomial with an even number of nonzero terms is divisible 
by $x+1$, so we must use polynomials with five or more 
nonzero terms~\cite{Kumada,Kur91,Menezes}.  
This is considerably more expensive in applications because the number of
operations required for multiplication or division by a sparse polynomial
is approximately proportional to the number of nonzero terms.

In \S\ref{sec:Swan} we discuss Swan's theorem, then
in \S\ref{sec:almost} we introduce ``almost irreducible'' and
``almost primitive'' polynomials as a way of circumventing the implications
of Swan's theorem.  An algorithm (AIT) 
for computing almost irreducible trinomials,
and an extension (APT) for almost primitive trinomials,
are described in \S\ref{sec:algs}.  
Algorithm APT has been used to find almost primitive trinomials
with high degree in cases where Swan's theorem shows that primitive
trinomials do not exist.
Computational results and examples
are given in \S\S\ref{sec:computation}--\ref{sec:examples}.
In \S\ref{sec:Fermat} we give some computational results on almost 
primitive trinomials that 
are useful for representing the finite fields $\GF(2^{2^k})$, $k \le 12$.
In \S\ref{sec:implicit} we explain how to use almost 
irreducible/primitive trinomials efficiently in applications.
Finally, in \S\ref{sec:density}, 
we conclude with some theoretical results on the density of
almost irreducible and almost primitive polynomials, 
and some computational results on the 
density of the corresponding 
trinomials.
We thank Shuhong Gao and the referees 
for their comments on a draft of this paper.

\section{Swan's theorem and its implications}
\label{sec:Swan}

Swan's theorem
is a rediscovery of results of
Pellet~\cite{Pellet},
Stickelberger~\cite{Stickelberger}, Dickson~\cite{Dickson}
and Dalen~\cite{Dalen}~--
see Swan \cite[p.~1099]{Swan} 
and von zur Gathen~\cite{Gathen03}. 
Let $\nu(P)$ denote the number of irreducible factors 
(counted according to their multiplicity) of a polynomial~$P \in \Ztwo[x]$.
\begin{theorem}{\rm Swan~\cite[Corollary 5]{Swan}}.
\label{thm:swan}
Suppose $n > s > 0$, $n-s$ odd. 
Then $\nu(x^n + x^s + 1) = 0 \mmod 2$ 
iff one of the following holds:\\
a) $n$ even, 
   $\;n \ne 2s$, $\;ns/2 \mmod 4 \in \{0,1\}$;\\
b) 
   $2n \ne 0 \mmod s$, $\;n = \pm 3 \mmod 8$;\\
c) 
   $2n = 0 \mmod s$, $\;n = \pm 1 \mmod 8$.
\end{theorem}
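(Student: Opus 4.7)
The plan is to invoke the classical Stickelberger--Swan criterion: for a monic $f \in \mathbb{Z}[x]$ of degree~$n$ whose reduction $\overline{f} \in \Ztwo[x]$ is squarefree, the number of irreducible factors $\nu(\overline{f})$ satisfies $\nu(\overline{f}) \equiv n \pmod 2$ if and only if $\mathrm{disc}(f) \equiv 1 \pmod 8$, and otherwise $\mathrm{disc}(f) \equiv 5 \pmod 8$ (these being the only possibilities when $\overline f$ is squarefree). First I would lift $P(x) = x^n + x^s + 1$ to $\mathbb{Z}[x]$ and verify squarefreeness mod~$2$: because $n-s$ is odd, exactly one of $n, s$ is even, so $P'(x)$ reduces mod~$2$ to $x^{n-1}$ (if $n$ is odd) or $x^{s-1}$ (if $n$ is even), and since $P(0) = 1$ we get $\gcd(\overline P, \overline{P'}) = 1$ in $\Ztwo[x]$. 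Hence the target $\nu(\overline P) \equiv 0 \pmod 2$ is equivalent to $\mathrm{disc}(P) \equiv 1 \pmod 8$ when $n$ is even, and to $\mathrm{disc}(P) \equiv 5 \pmod 8$ when $n$ is odd.

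Next I would compute $\mathrm{disc}(P) = (-1)^{n(n-1)/2}\mathrm{Res}(P, P')$ as a rational integer. Factoring $P'(x) = x^{s-1}(nx^{n-s} + s)$ and using multiplicativity of the resultant reduces the calculation to $\mathrm{Res}(P,\, nx^{n-s} + s)$, which can be evaluated at the roots $\alpha$ of $nx^{n-s}+s$. At such an $\alpha$, $\alpha^{n-s} = -s/n$, so $P(\alpha) = \tfrac{n-s}{n}\alpha^s + 1$; taking the product over the $n-s$ roots and simplifying yields a closed form for $\mathrm{disc}(P)$ in terms of $n, s, n-s$, and $d = \gcd(s, n-s)$. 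This is the computation carried out in detail in Swan's paper~\cite{Swan}.

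The heart of the proof is then to reduce this closed form modulo~$8$ and split into cases by the parities of $n$ and $s$. If $n$ is even and $s$ is odd (excluding the degenerate $n = 2s$, where the formula collapses), $\mathrm{disc}(P) \equiv 1 \pmod 8$ unpacks to $(n/2)s \bmod 4 \in \{0,1\}$, matching case~(a). If $n$ is odd and $s$ is even, the $2$-adic valuation of the discriminant jumps depending on whether $2n = 0 \mmod s$: when $2n \ne 0 \mmod s$ the condition $\mathrm{disc}(P) \equiv 5 \pmod 8$ reduces to $n \equiv \pm 3 \pmod 8$ (case~(b)), while when $2n = 0 \mmod s$ an extra factor of~$2$ cancels and the condition becomes $n \equiv \pm 1 \pmod 8$ (case~(c)). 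The main obstacle throughout is the bookkeeping of signs and powers of~$2$ in an expression involving terms like $n^{n/d}, s^{s/d}, (n-s)^{(n-s)/d}$; the cleanest organization is a case split on $(n \bmod 8, s \bmod 8)$ together with the $2$-adic valuations $v_2(n)$ and $v_2(s)$, with the boundary between~(b) and~(c) falling exactly at $v_2(s) = 1 + v_2(n)$, i.e.\ precisely when $s$ divides $2n$.
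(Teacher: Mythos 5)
The paper offers no proof of this theorem: it is stated as a citation of Swan's Corollary~5, so there is nothing internal to compare your argument against. What you have written is, in outline, Swan's own proof --- the parity criterion of Stickelberger--Swan ($\nu(\overline{f}) \equiv n \pmod 2$ iff $\mathrm{disc}(f)\equiv 1 \pmod 8$ for a lift with squarefree reduction), the verification of squarefreeness from $n-s$ odd and $P(0)=1$, the resultant computation giving $\mathrm{disc}(P) = \pm\bigl[n^{N} - (-1)^{N}(n-s)^{N-K}s^{K}\bigr]^{d}$ with $d=\gcd(n,s)$, $N=n/d$, $K=s/d$, and the reduction mod~$8$ by parity cases. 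That is the correct and standard route, and the degenerate exclusion $n=2s$ in case~(a) is handled appropriately.

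One side remark is wrong, though it does not affect the stated case boundaries: for $n$ odd and $s$ even, the condition $2n = 0 \mmod s$ is \emph{not} equivalent to $v_2(s) = 1 + v_2(n)$. The latter (here $v_2(s)=1$) is necessary but not sufficient --- take $n=5$, $s=6$, where $v_2(s)=1$ yet $s$ does not divide $2n$. The correct mechanism in Swan's computation is that $2n = 0 \mmod s$ holds exactly when $K = s/\gcd(n,s) = 2$, and it is the pair $\bigl(v_2(s), K\bigr)$ that decides whether the term $s^{K}$ contributes $4$ or $0$ modulo~$8$, which is what separates cases~(b) and~(c). If you intend to carry out the mod~$8$ bookkeeping yourself rather than defer to Swan, you should organize the case split on $K$ rather than on $v_2(s)$ alone.
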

If both $n$ and $s$ are odd, we can replace $s$ by $n-s$
(leaving the number of irreducible factors unchanged,
since $\nu(x^n + x^s + 1) = \nu(x^n + x^{n-s} + 1)$)
and apply Swan's theorem.  If $n$ and $s$ are both even,
then $T = x^n + x^s + 1$ 
is a square and $\nu(T)$ is even.
Thus, in all cases we can determine $\nu(T) \mmod 2$ using Swan's theorem.

Since a polynomial that has an even number of irreducible factors is
reducible, we have:

\begin{corollary}
\label{cor:swan}
If $n$ is prime, $n = \pm 3 \mmod 8$, $s \ne 2$, $s \ne n-2$,
then $x^n + x^s + 1$ is reducible over $\Ztwo$.
\end{corollary}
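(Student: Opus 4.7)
The plan is to invoke Theorem~\ref{thm:swan} to show that $\nu(T)$ is even for $T = x^n + x^s + 1$ under the stated hypotheses; since $T$ has positive degree we have $\nu(T) \ge 1$, so $\nu(T)$ even forces $\nu(T) \ge 2$, i.e., $T$ is reducible. Note first that $n$ being prime with $n \equiv \pm 3 \mmod 8$ forces $n$ odd (indeed $n > 2$). I will split on the parity of $s$.

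If $s$ is even, then $n-s$ is odd and Swan's theorem applies directly to the pair $(n,s)$. Case (a) is vacuous since $n$ is odd; case (c) is ruled out by $n \equiv \pm 3 \mmod 8$. For case (b), I need $2n \not\equiv 0 \mmod s$. Since $n$ is prime and $1 \le s \le n-1$, any divisor of $2n$ in this range is $1$ or $2$; but $s$ is even so $s \ne 1$, and $s \ne 2$ by hypothesis. Hence case (b) holds and $\nu(T)$ is even.

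If $s$ is odd, then $n-s$ is even, so Swan's theorem does not apply directly. I use the remark following the theorem: replace $s$ by $s' = n-s$, which leaves $\nu(T)$ unchanged and makes $n - s' = s$ odd. Now $s'$ is even, and the same analysis goes through; the excluded values $s' = 1$ and $s' = 2$ correspond to $s = n-1$ (which would make $n$ even, a contradiction) and $s = n-2$ (excluded by hypothesis), so case (b) again applies and $\nu(T)$ is even.

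There is really no substantive obstacle here; the proof is a bookkeeping exercise in Swan's theorem. The only point requiring a touch of care is the parity substitution $s \mapsto n-s$ when $s$ is odd, and tracking how the two excluded values $s \in \{2, n-2\}$ correspond to the two values of $s'$ that would make $s' \mid 2n$ in the transformed problem.
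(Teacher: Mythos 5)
Your proof is correct and follows exactly the route the paper intends: the corollary is stated as an immediate consequence of Theorem~\ref{thm:swan} (case~(b)) together with the remark that one may replace $s$ by $n-s$ when $n-s$ is even, with primality of $n$ reducing the condition $s \mid 2n$ to $s \in \{2, n-2\}$. The paper leaves these details implicit; you have simply written them out, and the bookkeeping is accurate.
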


Corollary~\ref{cor:swan} shows that there are no
irreducible trinomials with degree a Mersenne exponent $n = \pm 3 \mmod 8$
(except possibly for $s = 2$ or $n-2$).
This appears to prevent us from
using trinomials with periods $2^n-1$ in these cases. 
Fortunately, there is a way to circumvent Swan's theorem and avoid
paying a significant speed penalty in most
applications of irreducible/primitive trinomials. We describe this
in the following section.

\section{Almost primitive trinomials}
\label{sec:almost}

Tromp, Zhang and Zhao~\cite{Tromp97} asked the following question:
given an integer $r>1$, do there exist integers $n, s$
such that \[G = \gcd(x^n + x^s + 1, x^{2^r-1} + 1)\] is a primitive
polynomial of degree~$r$? They verified that the answer is ``yes'' for
$r \le 171$, and conjectured that the answer is always ``yes''.

Blake, Gao and Lambert~\cite{Blake96} confirmed the conjecture for
$r \le 500$. They also relaxed the condition slightly and asked: 
do there exist integers $n, s$
such that $G$ has a primitive factor of degree~$r$?
Motivated by~\cite{Blake96}, we make some definitions.
\begin{definition}
\label{def:almost}
A polynomial $P(x)$ of degree~$n$ is
{\em almost primitive (almost irreducible)}
if $P(0) \ne 0$			
and $P(x)$ has a primitive (irreducible) factor 
of degree~$r$, for some $r > n/2$. 
We say that $P$ has {\em exponent} $r$
and {\em increment} $n-r$.
\end{definition}

For example, the
trinomial $x^{16} + x^3 + 1$ is almost primitive with exponent 13 and
increment 3, because
\[x^{16} + x^3 + 1 = (x^3 + x^2 + 1)D_{13}(x),\]
where
\[D_{13}(x) = x^{13} + x^{12} + x^{11} + x^9 + x^6 + x^5 + x^4 + x^2 + 1\]
is primitive. 
{From} a computational viewpoint it is more efficient 
to work in the ring $\Ztwo[x]/(x^{16} + x^3 + 1)$ than in
the field $F = \Ztwo[x]/D_{13}(x)$.
In \S\ref{sec:implicit} we outline how it is possible to work in the field $F$,
while performing most arithmetic in the ring $\Ztwo[x]/(x^{16} + x^3 + 1)$,
and without explicitly computing the dense primitive polynomial $D_{13}(x)$.

Note that, according to Definition~\ref{def:almost}, 
a primitive polynomial is {\em a~fortiori}
an almost primitive polynomial (the case $r = n$).
Similarly, an irreducible polynomial other than 
$1$ or $x$ is almost irreducible.
The restriction $r > n/2$ in Definition~\ref{def:almost} 
ensures that polynomials such as
$(x^3 + x + 1)^2$ are not regarded as almost irreducible.

In practice 
we choose the smallest possible increment~$\delta$ for given exponent~$r$,
{\eg}~in Tables~\ref{tab:almostprim}--\ref{tab:Fermat} we have $\delta \le 16$.
For most practical purposes, almost primitive trinomials of exponent $r$ 
and small increment are almost as useful as primitive trinomials 
of degree~$r$ (see~\S\ref{sec:implicit}).

\section{Searching for almost irreducible/primitive trinomials}
\label{sec:algs}

In this section we outline algorithms for finding almost irreducible
or almost primitive trinomials with large exponent~$r$.  
In the latter case we assume that the complete factorization of $2^r-1$
is known.  
The algorithms are generalizations of those given 
in~\cite{rpb199,Gathen,Menezes}, 
which handle the case $\delta = 0$.

\subsection{An algorithm for almost irreducible trinomials}
\label{subsec:irred}

Suppose $0 \le \delta < r$, $0 < s < r + \delta$,
and we wish to test if the trinomial $T(x) = x^{r+\delta} + x^s + 1$
is almost irreducible with exponent~$r$ (see Definition~\ref{def:almost}).
If it is not then we discard it, and (perhaps)
try again with different $(r, s,\delta)$.

We first state the algorithm, then explain the steps whose justification
is not immediately obvious.
Input to the algorithm is $(r, s, \delta)$ and a 
sieving bound $B \in [\delta, r)$. The optimal $B$ is
implementation-dependent: 
see the discussion in~\cite{rpb199}.
In the computation of Table~\ref{tab:almostprim} we used 
$B = \min(r-1,\max(\delta, 4 + \lfloor \log_2 r \rfloor))$.
Recall that polynomials are in $\Ztwo[x]$,
so computations on polynomials are performed in $\Ztwo[x]$ or in a 
quotient ring such as $\Ztwo[x]/T(x)$.\\

\hspace*{25pt}{\bf Algorithm AIT$(r,s,\delta,B)$}
\begin{enumerate}
\item\label{step1}			
If $\gcd(r+\delta,s) = 0 \mmod 2$ then return false.
\item\label{step2}
$d:=0$; $k := 0$; $S := 1$; $T := x^{r+\delta} + x^s + 1$;\\
for $i := 2$ to $\delta$ do\\ 
\hspace*{10pt} $g := \gcd(T, (x^{2^i} \mmod T) + x)$;\\
\hspace*{10pt} $g := g/\gcd(g,S)$; $S := g \times S$;\\
\hspace*{10pt} $d := d + \deg(g)$; $k := k + \deg(g)/i$;
\item\label{step3}
if $(d \ne \delta)$ or $(k = \nu(T) \mmod 2)$ 
then return false.
\item\label{step4}
for $i := \delta+1$ to $B$ do\\
\hspace*{10pt} $g := \gcd(T, (x^{2^i} \mmod T) + x)$;\\
\hspace*{10pt} if $S \mmod g \ne 0$ then return false.
\item\label{step5}
if $((x^{2^r} \mmod T) + x)S\ne 0 \mmod T$ then 
 return false.
\item\label{step6}
for each prime divisor $p \ne r$ of $r$\\
\hspace*{10pt} if $\gcd(((x^{2^{r/p}} \mmod T) + x)S, T) \ne S$
then return false.
\item\label{step7}
return true. [$T$ is almost irreducible with exponent $r$.]
\qed						
\end{enumerate}

If $\delta = 0$, Algorithm AIT
reduces to a standard algorithm for finding irreducible 
trinomials. 
We can assume that $\delta \ne 1$, because the only irreducible 
polynomials of degree~1 are $x$ and $x+1$, and neither can be a
factor of a trinomial. 
Hence, we need only consider 
$\delta \ge 2$.  

Step~\ref{step1} discards trinomials that are squares (see 
Theorem~\ref{thm:gcdirred} below). If this step is passed then 
$\gcd(T,T') = 1$, so $T$ is square-free.

Step~\ref{step2} may be called {\em sieving}, although it is done
by GCD computations.
By computing $\gcd(T, (x^{2^i} \mmod T) + x)$ 
for $2 \le i \le \delta$, we find the product
$S = P_1\cdots P_k$ 
of all irreducible factors $P_j$ of $T$ such that
$\deg(P_j) = d_j \le \delta$.  
We have $T = SD$, where $D$ is some polynomial of degree $n-d$,
and $\gcd(S,D) = 1$. 

Step~\ref{step3} returns false if $d$ or $k$ is incompatible with the
assumption that $T$ has an irreducible factor of degree $r$.
Note that Swan's theorem gives $\nu(T) \mmod 2$.

In Step~\ref{step4}, 
suppose $S \mmod g \ne 0$. Thus $f = g/\gcd(S,g)$ is a factor of $D = T/S$,
and $f \ne 1$.  If $f \ne D$ then $D$ is reducible. If $f = D$ then $D$
splits into factors of degree at most $B < r$, so again $D$ is reducible.
Thus, we can return false.

In Step~\ref{step5},
sieving has failed to discard $T$, so a full irreducibility test
of $D$ is required.
We can discard $T$ ({\ie}~return false) if $x^{2^r}\ne x \mmod D$,
but $D$ is in general a dense polynomial, so we perform an equivalent
computation that only involves exponentiation mod~$T$.
Note that the computation of $x^{2^r} \mmod T$ takes only 
$O(r^2)$ 			
bit-operations, since $T$ is a trinomial.

Finally, we should return false if
$\gcd(x^{2^q} + x, D) \ne 1$\	
for any divisor $q$ of $r$, $q \ne r$.
Step~\ref{step6} implements an equivalent test that is more
efficient because the operations are performed mod~$T$ and
only maximal divisors $q = r/p$ of~$r$ are checked.  
Step~\ref{step6} is trivial if~$r$ is prime.

\subsection{Algorithm APT for almost primitive trinomials}
\label{subsec:prim}

To search for\linebreak 
almost primitive trinomials with exponent~$r$,
we apply Theorem~\ref{thm:gcdprim} below,
and then algorithm AIT, 
to find a candidate trinomial that is almost irreducible with exponent~$r$. 
Unless $2^r-1$ is prime, it is necessary
to verify that the irreducible factor $D$
of degree~$r$ has period $2^r-1$ and not some proper divisor of $2^r-1$.
This can be done by verifying that,
for each prime divisor $p$ of $2^r-1$,
\[((x^{(2^r-1)/p} \mmod T) + 1)S \ne 0 \mmod T\;.\]

\subsection{Refinements}
\label{subsec:refinements}
Several refinements are possible.

{\bf 1.}
The fast algorithm of~\cite[\S4]{rpb199} 
can be used to accelerate the computation of
$x^{2^i} \mmod T$ in 
steps \ref{step2}, \ref{step4}--\ref{step6} of Algorithm~AIT 
if $r+\delta$ is odd. 

{\bf 2.}
Sieving can often be curtailed. Suppose that
step~\ref{step2} of Algorithm~AIT 
has been performed for $i \le \delhat < \delta$,
so we have found all $\khat$ irreducible factors of\linebreak 
degree $\le\delhat$.
Suppose that the sum of their degrees is $\dhat$. If
\begin{equation}
\dhat < \delta < \dhat + \delhat + 1,		\label{eq:testodd}
\end{equation}
then the constraint $d = \delta$ can not be satisfied
and we can return false.
Also, if $\khat \ne \nu(T) \mmod 2$, then~(\ref{eq:testodd}) 
can be replaced by the weaker condition
\begin{equation}
\dhat < \delta < \dhat + 2(\delhat + 1).	\label{eq:testeven}
\end{equation}

{\bf 3.}
If $r$ is a Mersenne exponent, computation of 
the small factor $S$ can be avoided.
Define $F = \lcm(2^{d_j} - 1)$, so $F$ is a multiple of the period
of $S$, and $\gcd(F, 2^r-1) = 1$. Step~\ref{step2} of Algorithm AIT
can easily be modified to compute~$F$ instead of~$S$,
and to save $g_i = \deg g$ at iteration~$i$.
Step~\ref{step4} can be modified to return false if
$\deg g > \sum_{j|i,2 \le j\le \delta} g_j$.
Step~\ref{step5} can be modified to return false if
\begin{equation}
(x^F)^{2^r} \ne x^F \mmod T(x).				\label{eq:easytest}
\end{equation}
The computation involved is almost the same as for the ``standard''
method of testing irreducibility of a trinomial~\cite[\S3]{rpb199}: 
the significant difference is that we start with $x^F$ instead of~$x$.
This variation of algorithm AIT was used to compute most of the
entries in Table~\ref{tab:almostprim}. 

{\bf 4.}
Theorems \ref{thm:gcdirred}--\ref{thm:gcdprim} 
allow us to discard 
many 
trinomials quickly.
\begin{theorem}
\label{thm:gcdirred}
Let $T(x) = x^n + x^s + 1$ be an almost irreducible trinomial.
Then $\gcd(n,s)$ is odd.
\end{theorem}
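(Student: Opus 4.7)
My plan is a short proof by contrapositive. I will assume $\gcd(n,s)$ is even and show that $T(x) = x^n + x^s + 1$ cannot be almost irreducible.

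If $\gcd(n,s)$ is even, then both $n$ and $s$ are even; write $n = 2n'$ and $s = 2s'$. Since the Frobenius map $a \mapsto a^2$ is a ring homomorphism on $\Ztwo[x]$, we have
\[
T(x) \;=\; x^{2n'} + x^{2s'} + 1 \;=\; \bigl(x^{n'} + x^{s'} + 1\bigr)^2 \;=\; U(x)^2,
\]
where $\deg U = n' = n/2$.

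It follows that every irreducible factor of $T$ is an irreducible factor of $U$, and therefore has degree at most $n/2$. But the definition of ``almost irreducible'' requires an irreducible factor of degree $r > n/2$. This contradiction shows that $\gcd(n,s)$ must be odd.

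There is no real obstacle here: the entire argument rests on the Frobenius identity in characteristic $2$, and the strict inequality $r > n/2$ in Definition~\ref{def:almost} (which, as the authors note, is precisely what rules out squares like $(x^3+x+1)^2$). This is also the reason Step~\ref{step1} of Algorithm~AIT discards trinomials with $\gcd(r+\delta,s)$ even before any further work is done.
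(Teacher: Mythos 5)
Your proof is correct and is essentially identical to the paper's: both assume $\gcd(n,s)$ is even, write $T(x) = (x^{n/2}+x^{s/2}+1)^2$ using the characteristic-2 squaring identity, and conclude that no irreducible factor can have degree exceeding $n/2$, contradicting Definition~\ref{def:almost}. Your version just spells out the Frobenius justification and the connection to Step~\ref{step1} of Algorithm~AIT a bit more explicitly.
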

\begin{proof}
Assume that $\gcd(n,s)$ is even. Then 
$T(x) = (x^{n/2} + x^{s/2} + 1)^2$ is a square,
so can not have an irreducible factor of degree greater than $n/2$.
\end{proof}
We remark that $x^6 + x^3 + 1$ is irreducible (though not primitive)
over $\Ztwo$, and in this case $\gcd(n,s) = 3$.

\begin{theorem}
\label{thm:gcdprim}
Let $T(x) = x^n + x^s + 1$ be an almost primitive trinomial.
Then $\gcd(n,s) = 1$.
\end{theorem}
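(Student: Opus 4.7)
The plan is to extend Theorem~\ref{thm:gcdirred}, which already forces $g := \gcd(n,s)$ to be odd, by exploiting the primitivity hypothesis. I would argue by contradiction, supposing $g \ge 3$. Writing $n = gm$ and $s = gk$ with $\gcd(m,k) = 1$, one has $T(x) = U(x^g)$ where $U(y) = y^m + y^k + 1$, and the constraint $0 < s < n$ forces $0 < k < m$, hence $m \ge 2$.

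The next step is to pass to a splitting field. Let $D$ be a primitive factor of $T$ of degree $r > n/2$, and fix a root $\alpha \in \GF(2^r)$ of $D$; by primitivity, $\alpha$ has multiplicative order exactly $2^r - 1$. Then $\beta := \alpha^g$ satisfies $U(\beta) = T(\alpha) = 0$, so $\beta$ lies in some subfield $\GF(2^d) \subseteq \GF(2^r)$, where $d$ is the degree over $\GF(2)$ of the minimal polynomial of $\beta$. In particular $d \mid r$ and $d \le m$.

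Now I would combine primitivity with the fact $\beta \in \GF(2^d)^*$. Since $\beta^{2^d - 1} = 1$, we get $\alpha^{g(2^d - 1)} = 1$, so $2^r - 1 \mid g(2^d - 1)$. Because $d \mid r$ implies $(2^d - 1) \mid (2^r - 1)$, dividing through yields $(2^r - 1)/(2^d - 1) \mid g$, i.e.
\[
1 + 2^d + 2^{2d} + \cdots + 2^{r-d} \;\le\; g.
\]
Bounding the left-hand side below by its largest term $2^{r-d}$ gives $2^{r-d} \le g$. Finally, $r > gm/2$ together with $d \le m$ and $m \ge 2$ yields
\[
r - d \;>\; \frac{m(g-2)}{2} \;\ge\; g - 2,
\]
so $r - d \ge g - 1$, whence $2^{g-1} \le g$. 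This fails for every $g \ge 3$, giving the desired contradiction and forcing $g = 1$.

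The step I expect to require the most care is the final numerical estimate, because the margin $m \ge 2$ is tight. The example $x^6 + x^3 + 1$ mentioned after Theorem~\ref{thm:gcdirred} (with $g = 3$, $m = 2$, $d = 2$, $r = 6$) is genuinely irreducible and satisfies $2^{r-d} = 4 > g$, so primitivity is essential precisely at the step $\alpha^{g(2^d-1)} = 1$: without it the irreducible factor need not have full order $2^r - 1$, and the divisibility $(2^r-1)/(2^d-1) \mid g$ evaporates.
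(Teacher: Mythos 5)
Your proof is correct and follows essentially the same route as the paper's: both substitute $y = x^g$, observe that $y$ satisfies a trinomial of degree $n/g$ and hence has small order, and contradict the fact that $x$ has order $2^r-1$ with $r > n/2$. Your version is marginally sharper in replacing the paper's inequality $2^r-1 \le g(2^{n/g}-1)$ by the exact divisibility $(2^r-1)/(2^d-1) \mid g$, but the underlying idea is identical.
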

\begin{proof}
Suppose $g = \gcd(n,s) > 1$.  
{From} Theorem~\ref{thm:gcdirred} we 
can assume that $g \ge 3$. Write $y = x^g$.  
Thus $T(x) = y^{\nhat} + y^{\shat} + 1$, where $\nhat = n/g$, $\shat = s/g$.
The order of 
$y$ is at most $2^{\nhat}-1$.
Thus, the order of 
$x$ is at most $g(2^{\nhat}-1) = g(2^{n/g}-1)$.
If $T(x)$ is almost primitive with exponent $r$, then the order of
$x$ is $2^r-1$.  Thus
\[2^r-1 \le g(2^{n/g}-1).\]
Now $n+1 \le 2r$ by Definition~\ref{def:almost}, so
\begin{equation}
2^{(n+1)/2} - 1 \le g(2^{n/g}-1).			\label{eq:gnineq}
\end{equation}
The right-hand side of~(\ref{eq:gnineq}) is a decreasing function
of $g$ for $g \ge 3$. Thus,
\begin{equation}
2^{(n+1)/2} - 1 \le 3(2^{n/3} - 1).			\label{eq:ineq3} 
\end{equation}
It is easy to verify that~(\ref{eq:ineq3}) can not hold for $n \ge 6$,
but if $n < 6$ then $n/g < 2$, which is a contradiction.  Hence $g = 1$.
\end{proof}

\section{Computational results}
\label{sec:computation}

We conducted a search for
almost primitive trinomials whose exponent $r$ is also a Mersenne exponent.
For all Mersenne exponents $r = \pm 1 \mmod 8$ with $r \le 6972593$,
primitive trinomials of degree~$r$ are known, see~\cite{rpb199}.
Here we consider the cases $r = \pm 3 \mmod 8$, where the
existence of irreducible trinomials $x^r + x^s + 1$ 
is ruled out by
Swan's theorem (except for $s = 2$ or $r-2$, but the only known cases
are $r = 3, 5$).
For each exponent $r$ we searched for all almost primitive trinomials with the
minimal increment $\delta$ for which at least one almost primitive trinomial
exists. The search has been completed
for all Mersenne exponents $r < 10^7$.

In all cases of Mersenne exponent
$r = \pm 3 \mmod 8$, where $5 < r < 10^7$,
we have found at least one almost primitive trinomial
with exponent $r$ and some increment $\delta \in [2,12]$.
The results are summarized in Table~\ref{tab:almostprim}.
The first four entries are from~\cite[Table~4]{Blake96}; 
the other entries are new.
They were computed using 
Algorithm APT, with some simplifications that are possible
because $r$ is a Mersenne exponent (see~\S\ref{subsec:refinements}.3 above).

For all but two of the almost primitive trinomials
$x^{r+\delta}+x^s+1$ given in Table~\ref{tab:almostprim}, 
the period $\rho = (2^r-1)f$ satisfies
$\rho > 2^{r+\delta-1}$.
Thus, the period is greater than the greatest period ($2^{r+\delta-1}-1$)
that can be obtained for any polynomial
of degree less than $r + \delta$.
In the two exceptional cases
the small factors of degree~$8$ are
not primitive,  having period $85 = 255/3$.
 
For the largest known Mersenne exponent, $r = 13466917$,
we have not yet started an extensive computation, but 
we have shown that $\delta \ge 3$ and $\delta \ne 4$
(see Theorem~\ref{t13466917}).

\begin{table}
\begin{center}
\begin{tabular}{|c|c|c|c|c|} \hline
$r$  &$\delta$	&$s$	&$f$	&Small factors and remarks\\ 
\hline
13	&3	&3	&7	&$x^3 + x^2 + 1$\\
\hline
19	&3	&3	&7	&$x^3 + x + 1$\\
\hline
61	&5	&17	&31	&$x^5 + x^3 + x^2 + x + 1$\\ 
\hline
107	&2	&8	&3	&$x^2 + x + 1$\\
	&	&14	&3	&$x^2 + x + 1$\\
	&	&17	&3	&$x^2 + x + 1$\\
\hline
2203	&3	&355	&7	&$x^3 + x^2 + 1$\\
\hline
4253	&8	&1806	&255	&$x^8 + x^7 + x^2 + x + 1$\\
	&	&1960	&85	&$x^8 + x^6 + x^5 + x^4 + x^2 + x + 1$\\
\hline
9941	&3	&1077	&7	&$x^3 + x^2 + 1$\\
\hline
11213	&6	&227	&63	&$x^6 + x^5 + x^3 + x^2 + 1$\\
\hline
21701	&3	&6999	&7	&$x^3 + x^2 + 1$\\
	&	&7587	&7	&$x^3 + x^2 + 1$\\
\hline
86243	&2	&2288	&3	&$x^2 + x + 1$\\
\hline
216091	&12	&42930	&3937	&$x^{12} + x^{11} + x^5 + x^3 + 1$\\
	&	&	&	&$=(x^5 + x^4 + x^3 + x + 1)\cdot$\\
	&	&	&	&$(x^7 + x^5 + x^4 + x^3 + x^2 + x + 1)$\\
\hline
1257787	&3	&74343	&7	&$x^3 + x^2 + 1$\\ 
\hline 
1398269	&5	&417719	&21	&$x^5 + x^4 + 1 = (x^2+x+1)\cdot(x^3+x+1)$\\
\hline
2976221	&8	&1193004 &85	&$x^8 + x^7 + x^6 + x^5 + x^4 + x^3 + 1$\\
\hline
13466917&?	&?	&?	&None for $\delta < 3$ or $\delta = 4$\\
\hline
\end{tabular}
\vspace*{10pt}
\caption{}
\vspace*{-10pt}
{\small
\begin{center}
Some almost primitive trinomials over $\Ztwo$.\\
$x^{r+\delta}+x^s+1$ has a primitive factor of degree~$r$;\\
$\delta$ is minimal; $2s \le r+\delta$; the period $\rho=(2^r-1)f$.
\end{center}
} 
\label{tab:almostprim}
\end{center}
\end{table}

\section{Examples and special cases}
\label{sec:examples}

We considered the almost primitive trinomial $x^{16} + x^3 + 1$ in
\S\ref{sec:almost}. Here we give an example with much higher degree:
$r = 216091$, $\delta = 12$. We have
\[x^{216103} + x^{42930} + 1 = S(x)D(x),\]
where \[S(x) = x^{12} + x^{11} + x^5 + x^3 + 1,\]
and $D(x)$ is a (dense) primitive polynomial of degree 216091.
The factor $S(x)$ of degree 12 splits into a product of two primitive
polynomials, $x^5 + x^4 + x^3 + x + 1$ and 
$x^7 + x^5 + x^4 + x^3 + x^2 + x + 1$. 
The contribution to the
period from these factors is $f = \lcm(2^5-1,2^7-1) = 3937$.

\begin{theorem}
\label{t216091}
If $T$ is an almost irreducible trinomial with exponent 216091
and increment $\delta$, then $\delta \notin \{0,1,2,4,6\}$.
\end{theorem}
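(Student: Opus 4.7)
The plan is to treat the five excluded values of $\delta$ in turn, with $\delta \in \{0, 1, 2\}$ handled by direct theoretical arguments and $\delta \in \{4, 6\}$ by a case analysis on the shape of the small cofactor $P$ in the decomposition $T = PD$ (with $D$ irreducible of degree $r$).

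For $\delta = 1$, the only degree-$1$ irreducibles over $\Ztwo$ are $x$ and $x+1$, and neither divides $T$ since $T(0) = T(1) = 1$. For $\delta = 0$, almost irreducibility coincides with irreducibility; since $r = 216091$ is prime and $r \equiv 3 \pmod 8$, Corollary~\ref{cor:swan} restricts us to $s \in \{2, r-2\}$, and the reciprocal symmetry $\nu(x^n + x^s + 1) = \nu(x^n + x^{n-s} + 1)$ reduces the question to a single direct check that $x^{216091} + x^2 + 1$ is reducible, which can be done by verifying $x^{2^r} \not\equiv x \pmod T$. For $\delta = 2$, the degree-$2$ cofactor $P$ has no factor $x$ or $x+1$ and so must equal the unique degree-$2$ irreducible $x^2+x+1$; letting $\omega$ denote a primitive cube root of unity and using $r \equiv 1 \pmod 3$, we get $n = r+2 \equiv 0 \pmod 3$ and therefore $T(\omega) = 1 + \omega^s + 1 = \omega^s \ne 0$, contradicting $P \mid T$.

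For $\delta = 4$, I would enumerate the possible $P$ (all factors of degree $\ge 2$): either $P = (x^2+x+1)^2$, $P = \Phi_5(x) = x^4+x^3+x^2+x+1$, or $P$ is one of the two degree-$4$ factors of $\Phi_{15}$, namely $x^4+x+1$ and $x^4+x^3+1$. The first is ruled out because $(x^2+x+1)^2 \mid T$ would force $T'(\omega) = 0$, but a short case analysis on the parity of $s$ (using $n-1 \equiv 1 \pmod 3$) shows $T'(\omega) \ne 0$. The second is ruled out because $n \equiv 0 \pmod 5$, so $T(\alpha) = \alpha^s \ne 0$ at a root $\alpha$ of $\Phi_5$. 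For the last two, a computation in $\GF(16)$ with $n \equiv 5 \pmod{15}$ shows that either divides $T$ iff $s \equiv 10 \pmod{15}$, so if one divides $T$ then so does the other, forcing $\Phi_{15} \mid T$ and $\deg P \ge 8$, a contradiction. For $\delta = 6$, the same strategy applies to a richer list: $P$ is one of the nine irreducibles of degree $6$ (roots of order $9$, $21$, or $63$), $\Phi_7 = (x^3+x+1)(x^3+x^2+1)$, a square of a degree-$3$ irreducible, a product $(x^2+x+1)\,P_4$ with $P_4$ an irreducible of degree $4$, or $(x^2+x+1)^3$. Using $n \equiv 1 \pmod 3$, $n \equiv 2 \pmod 5$, $n \equiv 0 \pmod 7$, and $n \equiv 7 \pmod 9$, each sub-case either collapses to an impossibility (for example $\alpha^s = 0$) or yields a congruence on $s$ that forces $T$ to be divisible by a strictly larger cyclotomic factor, again contradicting $\deg P = 6$.

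The main obstacle is the combinatorial bookkeeping for $\delta = 6$, where some sixteen sub-cases must be dispatched; each is individually short (a few multiplications in $\GF(2^d)$ with $d \le 6$) but collectively verbose to write out cleanly. A compact alternative, fully in the spirit of this paper, is to run Algorithm~AIT on just the few residue classes of $s$ that survive the elementary congruences above; this takes negligible time and completes the proof.
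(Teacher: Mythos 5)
Your overall strategy (square-freeness, then congruences modulo small cyclotomic polynomials, split by the shape of the small cofactor) is the paper's strategy, and your treatments of $\delta\in\{1,2,4\}$ are complete and correct; the forced-pairing of the two primitive quartics via $s\equiv 10\pmod{15}$ is a nice touch. But the $\delta=6$ case is a genuine gap, not just bookkeeping. The hard sub-case is $S$ an irreducible sextic: for the six primitive sextics a root $\alpha$ has order $63$, $n\equiv 7\pmod{63}$, and $T(\alpha)=0$ pins $s$ to one residue class modulo $63$ per sextic (the discrete log of $\alpha^7+1$). Your claim that each such class ``forces $T$ to be divisible by a strictly larger cyclotomic factor'' is asserted, not verified, and nothing in the congruences $n\bmod 3,5,7,9$ guarantees it. If some class survives, your fallback of ``running Algorithm AIT on the surviving residue classes'' is not negligible: a class mod $63$ contains roughly $3400$ values of $s$, and each AIT call on a degree-$216097$ trinomial costs about $r$ modular squarings; worse, it converts a short theorem into a bulk computation. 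The paper closes this sub-case without any of that by invoking Swan's theorem: if $S=S_6$ then $\nu(T)=2$ is even and $n=216097\equiv 1\pmod 8$, so Swan forces $s\mid 2n$; with $\gcd(n,s)=1$ (Theorem~\ref{thm:gcdprim}) and $s$ taken even without loss of generality (replace $s$ by $n-s$ since $n$ is odd), this gives $s=2$, and then $n\equiv 1\pmod 3$ makes $x^2+x+1$ divide $T$ --- a contradiction in one line. The same WLOG reduction to $s$ even also collapses your square sub-cases immediately, since then $T'=x^{n-1}$ and $T$ is square-free.

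Two smaller points. For $\delta=0$ you reduce correctly to $s=2$ via Corollary~\ref{cor:swan}, but then propose verifying $x^{2^r}\not\equiv x\pmod T$, which is about $2\times 10^5$ squarings of a polynomial of that degree; the paper instead observes $216091\equiv 1\pmod 3$, so $x^2+x+1$ divides $x^{216091}+x^2+1$ outright. For $\delta=6$ the remaining shapes ($S_2S_4$ and $S_3\widehat S_3$) are handled in the paper exactly as you suggest: $n\equiv 7\pmod{15}$ makes $\deg\gcd(T,x^{15}-1)\le 4$ in all $15$ residues of $s$, and $n\equiv 0\pmod 7$ makes $\gcd(T,x^7-1)=1$. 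So the missing ingredient is precisely the Swan argument for the irreducible-sextic (and, symmetrically, the $\delta=0$) sub-case; without it your proof of $\delta\ne 6$ does not close.
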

\begin{proof}
As $T$ is a trinomial, it is not divisible by $x$ or $x+1$, so
$\delta \ne 1$.\linebreak
Assume that $T = x^n + x^s + 1$ is almost irreducible 
with $\deg(T) = n = r + \delta$, where\linebreak 
$\delta \in \{0,2,4,6\}$.
Thus $T = SD$ where $D$ is irreducible, $\deg(D) = r = 216091$,
$n$ is odd and we can assume that $s$ is even (otherwise
replace $s$ by $n-s$).
Since $r$ is a Mersenne exponent, $D$ is primitive and $T$ is
almost primitive.
Let $\nu_2 = \nu(T) \mmod 2$. We consider the cases $\delta = 0, 2, 4, 6$
in turn and show that each case leads to a contradiction.\\
{\bf A. $\delta = 0$.} $n = 3 \mmod 8$ and $\nu_2 = 1$, so by Swan's theorem
$s \;|\; 2n$. {From} Theorem~\ref{thm:gcdprim}, $\gcd(n,s)=1$, so the only
possibility is $s=2$.
Now $n = 1 \mmod 3$, so $x^n + x^2 + 1 = x^2 + x + 1 \mmod x^3 - 1$.
Thus $x^2 + x + 1 \;|\; x^n + x^2 + 1$, a contradiction.\\
{\bf B. $\delta = 2$.} 
$n = 0 \mmod 3$, so $T = x^s \mmod x^3 - 1$.
Thus $x^2 + x + 1$ does not divide $T$,	
but $x^2 + x + 1$ is the only irreducible
polynomial of degree~2, and hence the only possibility for $S$, 
a contradiction.\\
{\bf C. $\delta = 4$.}
$\deg(S) = 4$, but $S$ can not have irreducible factors of degree~2,
since $x^2 + x + 1$ is the only irreducible polynomial of degree~2, 
and $T$ is square-free.  Thus $S$
is irreducible of degree~4 and a divisor of $x^{15}-1$.  
We have $n = -1 \mmod 8$ 
and $\nu_2 = 0$, so by Swan's theorem 
and Theorem~\ref{thm:gcdprim} 
we must have $s = 2$.
Now $n = 5 \mmod 15$, so
$T = x^5 + x^2 + 1 \mmod x^{15} - 1$, 
but $x^5 + x^2 + 1$ is irreducible.
Thus $T$ has no factor of degree~4, a contradiction.\\
{\bf D. $\delta = 6$.}
$S$ could be of the
form $S_6$, $S_2S_4$ or $S_3\widehat{S}_3$,
where $S_j, \widehat{S}_j$ are irreducible of degree~$j$.
If $S = S_6$ then $\nu_2 = 0$ and, by Swan's theorem and
Theorem~\ref{thm:gcdprim}, $s = 2$.  However, $n = 1 \mmod 3$,
so $x^2 + x + 1\;|\;x^n + x^2 + 1$, a contradiction.
If $S = S_2S_4$ then $\deg(\gcd(T,x^{15}-1)) = 6$.
Now $n = 7 \mmod 15$, so $T = x^7 + x^{s \smod 15} + 1 \mmod x^{15}-1$,
and in each of the 15 cases we find that $\deg(\gcd(T,x^{15}-1)) \le 4$.
If $S = S_3\widehat{S}_3$ then 
$\deg(\gcd(T,x^7-1)) = 6$.
Now\linebreak $n = 0 \mmod 7$, so $T = x^{s \smod 7} \mmod x^7-1$, and
$\gcd(T,x^7-1) = 1$. Thus $\delta \ne 6$.
\end{proof}
\begin{theorem}
\label{t2976221}
If $T$ is an almost irreducible trinomial with exponent 2976221
and increment $\delta$, then $\delta \notin \{0,1,2,4\}$.
\end{theorem}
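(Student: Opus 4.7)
The plan is to imitate the case-by-case structure of Theorem \ref{t216091}, dispatching each $\delta \in \{0,1,2,4\}$ by combining Swan's theorem, Theorem \ref{thm:gcdprim}, and divisibility mod $x^k-1$ for a few small~$k$. First I would record the useful congruences for $r = 2976221$: $r \equiv -3 \pmod 8$, $r \equiv 2 \pmod 3$, $r \equiv 3 \pmod 7$, $r \equiv 1 \pmod 5$, and hence $r \equiv 11 \pmod{15}$. Since $r$ is a Mersenne exponent, any irreducible factor of degree $r$ is automatically primitive, so an almost irreducible trinomial with exponent~$r$ is also almost primitive; consequently Theorem~\ref{thm:gcdprim} applies and $\gcd(n,s) = 1$ throughout, where $n = r + \delta$. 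Whenever $n$ is odd I would invoke the symmetry $\nu(x^n + x^s + 1) = \nu(x^n + x^{n-s} + 1)$ to take $s$ even in the Swan analysis.

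The cases $\delta \in \{0,1,4\}$ dispatch exactly in the style of Theorem~\ref{t216091}. For $\delta = 1$ there are no linear factors. For $\delta = 0$, Swan case~(b) (since $n = r \equiv -3 \pmod 8$) combined with $\gcd(n,s)=1$ reduces us, up to symmetry, to $s=2$; then $T \equiv x^3 + x^2 + 1 \pmod{x^7-1}$ because $r \equiv 3 \pmod 7$, exhibiting a degree-$3$ factor of $T$ and contradicting irreducibility. For $\delta = 4$, the only admissible shape for the cofactor $S$ of degree~$4$ is a single irreducible of degree~$4$ (since $T$ is square-free and $x^2 + x + 1$ is the unique degree-$2$ irreducible), so $S \mid x^{15}-1$. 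Swan case~(c), now applicable because $n \equiv 1 \pmod 8$, together with $\gcd(n,s)=1$ again forces $s = 2$; but $n \equiv 0 \pmod{15}$ gives $T \equiv x^2 \pmod{x^{15}-1}$, so $\gcd(T, x^{15}-1) = 1$ and no such $S$ can divide $T$.

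The genuine obstacle is $\delta = 2$. Here $S$ must be $x^2+x+1$, and Swan case~(c) with $n = r + 2 \equiv -1 \pmod 8$ and $\gcd(n,s) = 1$ forces $s = 2$, leaving the single candidate $T = x^{r+2} + x^2 + 1$. Unlike in Theorem~\ref{t216091}, the short mod-$(x^3 - 1)$ argument fails: with $n \equiv 1 \pmod 3$ and $s \equiv 2 \pmod 3$ one has $T(\omega) = \omega + \omega^2 + 1 = 0$ at a primitive cube root of unity, so $(x^2+x+1)$ genuinely divides $T$. One checks in addition that $\gcd(T, x^{2^k} + x) = x(x+1)(x^2+x+1)$ for all $k \le 6$, ruling out any auxiliary factor of $T$ of degree at most $6$; this is the furthest one can push by pure modular arithmetic against small cyclotomics. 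The plan is therefore to close the $\delta = 2$ case by running Algorithm~AIT on $(r,s,\delta) = (2976221,2,2)$: sieving at steps~\ref{step2}--\ref{step4} turns up only the known factor $x^2+x+1$, and step~\ref{step5} rejects $T$ because the test $((x^{2^r} \bmod T)+x)\cdot S \equiv 0 \pmod T$ fails, equivalently $T/(x^2+x+1)$ is not a divisor of $x^{2^r}+x$ and hence cannot be irreducible of degree~$r$. The difficulty of the theorem is concentrated entirely in this last step: Swan-theoretic reasoning and congruences mod $x^k - 1$ narrow the search to a single candidate for each $\delta$, but certifying the reducibility of that candidate when $\delta = 2$ is inherently a computation.
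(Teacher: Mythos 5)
Your handling of $\delta\in\{0,1,4\}$ coincides with the paper's proof: the same Swan/Theorem~\ref{thm:gcdprim} narrowing and the same reductions modulo $x^7-1$ and $x^{15}-1$ (for $\delta=4$ the paper does not even need to force $s=2$, since $n\equiv 0\pmod{15}$ gives $T\equiv x^{s\bmod 15}\pmod{x^{15}-1}$ for every $s$, but your version also works). The genuine gap is in the case $\delta=2$. Having correctly narrowed to the single candidate $T=x^{2976223}+x^2+1$ with $S=x^2+x+1$, you assert that checking factors of degree at most $6$ is ``the furthest one can push by pure modular arithmetic against small cyclotomics'' and that the case is ``inherently a computation,'' to be settled by the full irreducibility test of step~\ref{step5} of Algorithm~AIT, i.e.\ by computing $x^{2^r}\bmod T$ with $r\approx 3\times 10^6$. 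That claim is false, and it is exactly where the paper's proof does something different: one more step of the same modular arithmetic succeeds. Since $2976223\equiv 118\pmod{255}$, we have $T\equiv x^{118}+x^2+1\pmod{x^{255}-1}$, and a short division shows that the irreducible octic $x^8+x^7+x^3+x^2+1$ divides $x^{118}+x^2+1$, hence divides $T$. A degree-$8$ irreducible factor is incompatible with $T=(x^2+x+1)D$ with $D$ irreducible of degree $r$, so the case closes with a certificate checkable by hand. Your proposal stops the cyclotomic search two steps too early and then replaces the missing argument with the asserted (and unexecuted) outcome of an $O(r^2)$-bit-operation computation; that assertion is the content of the theorem in this case, so it cannot simply be claimed.

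Two further inaccuracies in the same passage: a trinomial is divisible by neither $x$ nor $x+1$, so the gcds you report cannot contain the factor $x(x+1)$; and the statement that ``sieving at steps~\ref{step2}--\ref{step4} turns up only the known factor $x^2+x+1$'' is wrong. With the paper's sieve bound $B=4+\lfloor\log_2 r\rfloor=25$, step~\ref{step4} at $i=8$ computes $\gcd(T,x^{256}+x)$, finds the degree-$8$ factor above, and rejects $T$ there; step~\ref{step5} is never reached. So even the fallback computation, as you describe it, eliminates the candidate at a different place and for a different (and much cheaper) reason than you state.
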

\begin{proof}
The proof is similar to that of Theorem~\ref{t216091}.
Assume that $\delta \in \{0,2,4\}$ and that $s$ is even.
If $\delta = 0$ we must have $s = 2$.
Now $n = 3 \mod 7$, so\linebreak
$T = x^3 + x^2 + 1 \mmod x^7 - 1$, 
and thus $T$ has an irreducible factor $x^3 + x^2 + 1$.
If $\delta = 2$, again we must have $s = 2$. 
In this case $T = x^{118} + x^2 + 1 \mmod x^{255}-1$,
and a computation shows that 
$x^8 + x^7 + x^3 + x^2 + 1\;|\;T$.
Finally, if $\delta = 4$, we have
$T = x^{s \smod 15} \mmod x^{15}-1$,
so $T$ has no irreducible factor of degree~4.
\end{proof}
\begin{theorem}
\label{t13466917}
If $T$ is an almost irreducible trinomial with exponent 
13466917 and increment $\delta$, then $\delta \notin \{0,1,2,4\}$.
\end{theorem}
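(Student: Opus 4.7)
The plan is to follow the template of Theorems \ref{t216091} and \ref{t2976221}, reducing each case $\delta \in \{0,1,2,4\}$ to a contradiction using Swan's theorem, Theorem \ref{thm:gcdprim}, and a small residue computation. First I would record the arithmetic facts $r \equiv -3 \pmod 8$, $r \equiv 1 \pmod 3$, and $r \equiv 7 \pmod{15}$, since all the Swan/cyclotomic reductions below hinge on these. The case $\delta = 1$ is disposed of instantly: no trinomial is divisible by $x$ or $x+1$ because $T(0)=T(1)=1$. For the remaining cases, $n = r + \delta$ is odd; after possibly swapping $s$ with $n-s$ we may take $s$ even, which makes $T'=x^{n-1}$ and hence $\gcd(T,T')=1$, so $T$ is square-free. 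Since $r=13466917$ is a Mersenne exponent, the degree-$r$ irreducible factor guaranteed by almost irreducibility is automatically primitive, so Theorem \ref{thm:gcdprim} applies and gives $\gcd(n,s)=1$.

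For $\delta=0$, $n \equiv -3 \pmod 8$ and we would need $\nu(T)$ odd (as $T$ itself would be irreducible); Swan's theorem part (b) then forces $s \mid 2n$, and combined with $\gcd(n,s)=1$ and $s$ even this yields $s=2$. Reducing modulo $x^3-1$ and using $n \equiv 1 \pmod 3$ gives $T \equiv x^2+x+1$, so $x^2+x+1 \mid T$, contradicting irreducibility. For $\delta=2$, $S$ has degree $2$, so $S = x^2+x+1$; but $n=r+2 \equiv 0 \pmod 3$ makes $T \equiv x^{s \bmod 3} \pmod{x^3-1}$, which is a unit mod $x^2+x+1$, contradiction.

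The substantive case is $\delta=4$. By square-freeness of $T$ and uniqueness of the irreducible quadratic, $S$ cannot split into two quadratic factors, and degree-$1$ factors are excluded as above, so $S$ is irreducible of degree $4$ and therefore divides $x^{15}-1$. Since $n \equiv 1 \pmod 8$ and $\nu(T) \equiv 0 \pmod 2$, Swan's theorem part (c) requires $s \mid 2n$, and with $\gcd(n,s)=1$ and $s$ even this again forces $s=2$. Using $n \equiv 11 \pmod{15}$, we have $T \equiv x^{11}+x^{2}+1 \pmod{x^{15}-1}$. The plan is then to verify that none of the three degree-$4$ irreducible factors of $x^{15}-1$, namely $x^4+x+1$, $x^4+x^3+1$, and $x^4+x^3+x^2+x+1$, divides $x^{11}+x^{2}+1$. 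This is the main concrete obstacle, but it is a routine finite computation: reduce $x^{11}$ modulo each candidate, add $x^2+1$, and observe the result is nonzero in all three cases, yielding the contradiction that completes the proof.
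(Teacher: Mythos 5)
Your proof is correct and follows essentially the same route as the paper's: eliminate $\delta=1$ trivially, normalize to $s$ even, use the Mersenne-exponent property to invoke Theorem~\ref{thm:gcdprim}, and reduce each of $\delta=0,2,4$ modulo $x^3-1$ or $x^{15}-1$ after Swan's theorem forces $s=2$ where needed. The only (immaterial) difference is at the very end: the paper concludes the $\delta=4$ case by noting that $x^{11}+x^2+1$ is itself irreducible, hence has no degree-4 factor, whereas you test the three irreducible quartics individually — both are routine finite checks.
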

\begin{proof}
As above, we can assume that $\delta \ne 1$, $n = r + \delta$ is odd,
and without loss of generality $s$ is even.
If $\delta = 0$ the only case to consider is $s = 2$, but 
$n = 1 \mmod 3$, so $T = x^2 + x + 1 \mmod x^3 - 1$, and thus
$T$ is divisible by $x^2 + x + 1$.
If $\delta = 2$ then $T = x^{s \smod 3} \mmod x^3 - 1$, so $T$ is never
divisible by $x^2 + x + 1$.
If $\delta = 4$ then $S$ must be irreducible of degree~4, and 
the only possibility is $s=2$.
Now $T = x^{11} + x^2 + 1 \mmod x^{15} - 1$, but $x^{11} + x^2 + 1$
is irreducible, so $T$ has no irreducible factor of degree~4.
\end{proof}

\section{The Fermat connection}
\label{sec:Fermat}

If we have found an almost irreducible trinomial
$T = x^n + x^s + 1$ with exponent $r = n - \delta$,
then to check if $T$ is almost primitive we need the
complete factorization of $2^r-1$.  
In~\S\ref{sec:computation} we chose $r$ so the factorization was
trivial, because $2^r-1$ was a Mersenne prime. Another case of interest,
considered in this section, is when $r$ is a power of two, say $r = 2^k$. 
Then
\[2^r - 1 = F_0 F_1 \cdots F_{k-1},\]
where $F_j = 2^{2^j}+1$ is the $j$-th Fermat number.
The complete factorizations of these $F_j$ are known for $j \le 11$
(see~\cite{rpb161}) so we can factor $2^{2^k}-1$ for $k \le 12$. 

In Table~\ref{tab:Fermat} we give almost primitive trinomials 
$T = x^{r+\delta}+x^s+1$ with exponent
$r = 2^k$ for $3 \le k \le 12$.
Thus $T = SD$ where $D$ is primitive and has degree~$2^k$.
We also give $S$ in factored form.
The irreducible factors of $S$ are not always primitive.
The period of $T$ is $\lcm(2^r-1,\period(S)) = (2^r-1)f$.

By Swan's theorem,
a primitive trinomial of degree $2^k$ does not exist for $k \ge 3$.
However, we can work efficiently in the finite fields $\GF(2^{2^k})$,
$k \in [3, 12]$, using the trinomials listed in Table~\ref{tab:Fermat} 
and the implicit algorithms of~\S\ref{sec:implicit}.

\begin{table}
\begin{center}
\begin{tabular}{|c|c|c|c|c|c|} \hline
$k$ 	&$r$  &$\delta$	&$s$	&$f$	&Small factor $S(x)$\\ 
\hline
3	&8 	&5 	&1 	&31	&$x^5 + x^4 + x^3 + x + 1$\\
	&	&	&2	&7	&$(x^2 + x + 1)(x^3 + x + 1)$\\ 
\hline
4	&16	&11	&2	&7	&$(x^3 + x + 1)
				  (x^8 + x^7 + x^6 + x^3 + x^2 + x + 1)$\\
\hline
5	&32	&8	&3	&1	
	&$x^8 + x^6 + x^5 + x^4 + x^2 + x + 1$\\
\hline
6	&64	&10	&3	&21	
	&$(x^4 + x + 1)(x^6 + x^5 + x^4 + x + 1)$\\
	&	&	&21	&341	
	&$x^{10} + x^7 + x^6 + x^5 + x^3 + x^2 + 1$\\
\hline
7	&128	&2	&17	&1	&$x^2 + x + 1$\\
\hline
8	&256	&16	&45	&1	&$x^{16} + x^{15} + x^{14} + x^{11} + x^9 + 
				   x^7 + x^3 + x + 1$\\
\hline
9	&512	&9	&252	&31	&$(x^4 + x + 1)(x^5 + x^3 + 1)$\\
\hline
10	&1024	&3	&22	&7	&$x^3 + x^2 + 1$\\
\hline
11	&2048	&10	&101	&341	
	&$x^{10} + x^9 + x^8 + x^7 + x^5 + x^4 + 1$\\
\hline
12	&4096	&3	&600	&7	&$x^3 + x + 1$\\ 
	&	&	&628	&7	&$x^3 + x + 1$\\
	&	&	&1399	&7	&$x^3 + x^2 + 1$\\
\hline
\end{tabular}
\vspace*{10pt}
\caption{}
\vspace*{-10pt}
{\small
\begin{center}
Some almost primitive trinomials over $\Ztwo$.\\
$x^{r+\delta}+x^s+1$ has a primitive factor of degree~$r = 2^k$;\\
$\delta$ is minimal; $2s \le r+\delta$; the period $\rho=(2^r-1)f$.
\end{center}
} 
\label{tab:Fermat}
\end{center}
\end{table}

\section{Implicit algorithms}
\label{sec:implicit}

Suppose we wish to work in the finite field $\GF(2^r)$ where $r$ is the
exponent of an almost primitive trinomial $T$.
We can write
$T = SD$,
where $\deg(S) = \delta$, $\deg(D) = r$.
Thus
\[GF(2^r) \equiv \Ztwo[x]/D(x),\]
but because $D$ is dense we wish to avoid working directly with $D$,
or even explicitly computing $D$.  We show that it is possible to
work modulo the trinomial $T$.  

We can regard $\Ztwo[x]/T(x)$ as a redundant representation of
$\Ztwo[x]/D(x)$.
Each element $A \in \Ztwo[x]/T(x)$ can be represented
as 
\[A = A_c + A_dD,\]
where $A_c \in \Ztwo[x]/D(x)$ is the ``canonical representation'' 
that would be obtained if we worked in $\Ztwo[x]/D(x)$,
and $A_d \in \Ztwo[x]$ is some
polynomial of degree less than~$\delta$.

We can perform computations such as addition, multiplication
and exponentiation in $\Ztwo[x]/T(x)$, taking advantage of the sparsity
of $T$ in the usual way.\linebreak  
If $A \in \Ztwo[x]/T(x)$ and we wish to
map $A$ to its canonical representation $A_c$,
we use the identity
\[A_c = (AS \mmod T)/S,\]
where the division by the (small) polynomial $S$ is exact.
A straightforward implementation requires only $O(\delta r)$ operations.
We avoid computing $A_c = A \mmod D$ directly; in fact we never
compute the (large and dense) polynomial $D$: 
it is sufficient that $D$ is determined
by the trinomial $T$ and the small polynomial~$S$.

In applications such as random number generation~\cite{rpb211},
where the trinomial
$T = x^n + x^s + 1$ is the denominator of the
generating function for a linear recurrence $u_k = u_{k-s} + u_{k-n}$,
it is possible (by choosing appropriate initial conditions that annihilate
the unwanted component) to generate
a sequence that satisfies the recurrence defined by the 
polynomial $D$. 
However, this is not necessary if all that matters is that the
linear recurrence generates a sequence with period at least $2^r-1$.

\section{The density of almost irreducible/primitive polynomials}
\label{sec:density}

In this section we state some theorems regarding the distribution of
almost irreducible {\em polynomials}.  The proofs are straightforward,
and similar to the proof of Theorem~1.2 in~\cite{Gao99},
which generalizes our Corollary~\ref{cor:density1}.
We would like to prove similar theorems about almost irreducible
{\em trinomials}, but this seems to be difficult.

Let $I_n$ denote the number of irreducible polynomials of degree~$n$, 
excluding the polynomial $x$, and let
$N_{r,\delta}$ denote the number of almost irreducible polynomials with
exponent $r$ and increment $\delta$.  
Thus
$\sum_{d|n} d I_d = 2^n - 1$
and, by M\"obius inversion,
\[I_n = \frac{1}{n} \sum_{d|n} \mu(d)\left(2^{n/d}-1\right)
      = \frac{2^n}{n}\left(1 + O(2^{-n/2})\right)\;.\]	
\begin{theorem}
\label{thm:density}
If $0 \le \delta < r$, then $N_{r,\delta} = 2^{\max(0,\delta-1)}I_r$.
\end{theorem}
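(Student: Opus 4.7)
The plan is to parameterize almost irreducible polynomials $P$ of degree $n=r+\delta$ by the unique factorization $P=SD$, where $D$ is the irreducible factor of degree $r$ and $S=P/D$ has degree $\delta$, and then to count the admissible pairs $(S,D)$.

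First I would establish uniqueness. Since $\delta<r$, we have $n=r+\delta<2r$, so $P$ cannot contain two irreducible factors of degree $r$ (their product would already have degree $2r>n$). Hence the factor $D$ in Definition~\ref{def:almost} is unique, and the map $P\mapsto(S,D)$ with $S:=P/D$ is well defined and injective on the set of almost irreducible polynomials with exponent $r$ and increment $\delta$. The hypothesis $P(0)\ne 0$ forces both $D(0)\ne0$ and $S(0)\ne0$; in particular $D\ne x$, so $D$ is one of the $I_r$ polynomials counted in the statement.

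Next I would verify the converse: given any irreducible $D$ of degree $r$ with $D\ne x$ and any monic $S\in\Ztwo[x]$ of degree $\delta$ with $S(0)\ne 0$, the product $P:=SD$ has degree $r+\delta$, satisfies $P(0)=S(0)D(0)=1\ne0$, and visibly has $D$ as an irreducible factor of degree $r>n/2$; thus $P$ is almost irreducible with exponent $r$ and increment $\delta$. So $N_{r,\delta}$ equals $I_r$ times the number $M_\delta$ of monic polynomials $S\in\Ztwo[x]$ of degree $\delta$ with nonzero constant term.

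Finally I would compute $M_\delta$. For $\delta=0$ the only option is $S=1$, so $M_0=1=2^{\max(0,-1)}$. For $\delta\ge1$, the total number of monic polynomials of degree $\delta$ over $\Ztwo$ is $2^\delta$, and those with $S(0)=0$ are exactly $x\cdot\widetilde S$ for arbitrary monic $\widetilde S$ of degree $\delta-1$, giving $2^{\delta-1}$ such polynomials; hence $M_\delta=2^\delta-2^{\delta-1}=2^{\delta-1}=2^{\max(0,\delta-1)}$. Combining these gives $N_{r,\delta}=2^{\max(0,\delta-1)}I_r$. There is no real obstacle in this argument; the only delicate point is the uniqueness of $D$, which is where the restriction $r>n/2$ (equivalently $\delta<r$) in Definition~\ref{def:almost} is essential.
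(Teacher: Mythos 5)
Your proof is correct and follows essentially the same route as the paper's: both count pairs $(S,D)$ via the decomposition $P=SD$, using $\delta<r$ to get uniqueness of the degree-$r$ factor, and multiply $I_r$ by the $2^{\max(0,\delta-1)}$ choices of $S$ with $S(0)\ne 0$. You simply spell out the injectivity/surjectivity and the count of admissible $S$ in more detail than the paper does.
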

\begin{proof}
The case $\delta=0$ is immediate, so assume that $\delta > 0$.
Thus $r > 1$.
For each irreducible polynomial $D$ of degree $r$,
and each polynomial $S$ of degree $\delta$ such that $S(0) \ne 0$,
there is an almost irreducible polynomial $P = SD$. Also, by
the constraint $\delta < r$, $P$ determines $S$ and $D$ uniquely.
Thus, the result follows by a counting argument, since there are
$2^{\delta-1}$ possibilities for~$S$ and $I_r$ possibilities for~$D$.
\end{proof}
\begin{corollary}	
\label{cor:density1}
If $n = r + \delta$, $0 < \delta < r$, and
$P$ is chosen uniformly at random from the $2^{n-1}$ polynomials of 
degree $n$ with $P(0) \ne 0$, 
then the probability that $P$ is almost irreducible
with exponent $r$ is $\frac{1}{r}\left(1 + O(2^{-r/2})\right)$.
\end{corollary}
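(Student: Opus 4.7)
The plan is to combine the exact count from Theorem~\ref{thm:density} with the asymptotic formula for $I_r$ stated just above the theorem; the result then falls out immediately upon division.

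First, I would compute the size of the sample space. A polynomial of degree $n$ over $\Ztwo$ with $P(0)\ne 0$ has leading coefficient $1$ and constant term $1$, so the $n-1$ middle coefficients are free, giving exactly $2^{n-1}$ such polynomials. This matches the normalization stated in the corollary.

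Next, since $0 < \delta < r$, Theorem~\ref{thm:density} applies and gives the favourable count $N_{r,\delta} = 2^{\delta-1}I_r$. Dividing,
\[
\frac{N_{r,\delta}}{2^{n-1}} \;=\; \frac{2^{\delta-1}I_r}{2^{r+\delta-1}} \;=\; \frac{I_r}{2^r}.
\]
The $2^{\delta-1}$ factors cancel, so the probability is independent of $\delta$ in the admissible range, which is a mild cross-check on the formula.

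Finally, I would plug in the asymptotic $I_r = \frac{2^r}{r}\bigl(1 + O(2^{-r/2})\bigr)$ which is derived from the standard M\"obius-inversion identity $\sum_{d|r} d\,I_d = 2^r - 1$ displayed before Theorem~\ref{thm:density} (the error term $O(2^{-r/2})$ comes from bounding the subsum over proper divisors $d \le r/2$ by a geometric series). This gives the claimed probability $\frac{1}{r}\bigl(1 + O(2^{-r/2})\bigr)$. There is no genuine obstacle here; the only subtle point is verifying that Theorem~\ref{thm:density} is being invoked in its clean $\delta \ge 1$ form (so that the $\max$ collapses), which is guaranteed by the hypothesis $0 < \delta < r$.
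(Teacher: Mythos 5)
Your proof is correct and follows exactly the route the paper intends: the paper leaves Corollary~\ref{cor:density1} without an explicit proof precisely because it is the immediate consequence of dividing the count $N_{r,\delta}=2^{\delta-1}I_r$ from Theorem~\ref{thm:density} by $2^{n-1}$ and inserting the stated asymptotic for $I_r$. Your cancellation of the $2^{\delta-1}$ factors and the observation that the probability is independent of $\delta$ are both accurate.
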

\begin{corollary}
\label{cor:density2}
If $n \ge 1$ and
$P$ is chosen uniformly at random from the $2^{n-1}$ polynomials of 
degree $n$ with $P(0) \ne 0$, 
then the probability that $P$ is almost\linebreak 
irreducible with some valid exponent 
is $\ln 2 + O(1/n)$.
\end{corollary}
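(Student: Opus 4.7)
The plan is to reduce this to Theorem~\ref{thm:density} via a disjoint-events argument and then evaluate the resulting harmonic sum. First observe that if $r > n/2$, a polynomial of degree~$n$ can have at most one irreducible factor of degree~$r$, since two such factors would force the total degree to exceed~$n$. Consequently, the events ``$P$ is almost irreducible with exponent~$r$'' (as $r$ ranges over valid exponents) are pairwise disjoint, and the number of almost irreducible degree-$n$ polynomials with $P(0) \neq 0$ equals
\[
M_n \;=\; \sum_{\delta=0}^{\lfloor (n-1)/2 \rfloor} N_{n-\delta,\delta}.
\]
The desired probability is then $M_n/2^{n-1}$.

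Next I would insert the formula $N_{r,\delta} = 2^{\max(0,\delta-1)} I_r$ from Theorem~\ref{thm:density} together with the asymptotic $I_r = (2^r/r)\bigl(1 + O(2^{-r/2})\bigr)$. The $\delta = 0$ term contributes $I_n/2^{n-1} = O(1/n)$. For $\delta \geq 1$ the powers of two collapse, since $2^{\delta-1}\cdot 2^{n-\delta}/2^{n-1} = 1$, so after the substitution $r = n-\delta$ one obtains
\[
\frac{M_n}{2^{n-1}} \;=\; O\bigl(1/n\bigr) \;+\; \sum_{r=\lfloor n/2 \rfloor+1}^{n-1} \frac{1}{r}\bigl(1 + O(2^{-r/2})\bigr).
\]
The harmonic sum equals $H_{n-1} - H_{\lfloor n/2 \rfloor}$, and applying $H_k = \ln k + \gamma + O(1/k)$ at both endpoints gives $\ln\bigl((n-1)/\lfloor n/2 \rfloor\bigr) + O(1/n)$; a brief parity split ($n = 2m$ vs.\ $n = 2m+1$) shows this reduces to $\ln 2 + O(1/n)$. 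The aggregated exponential errors contribute $\sum_{r \ge n/2} 2^{-r/2}/r = O(2^{-n/4})$, which is absorbed into $O(1/n)$.

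There is no real obstacle. The only points requiring care are (i)~invoking the disjointness of events before summing, so that no inclusion--exclusion correction is needed, and (ii)~verifying that the difference $H_{n-1} - H_{\lfloor n/2\rfloor}$ is $\ln 2 + O(1/n)$ uniformly in the parity of~$n$. Both are routine and the rest is bookkeeping.
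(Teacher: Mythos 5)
Your proof is correct and is precisely the ``straightforward'' argument the paper alludes to but does not write out: sum Theorem~\ref{thm:density} over the (pairwise disjoint, since a degree-$n$ polynomial has at most one irreducible factor of degree exceeding $n/2$) exponents $r>n/2$, note the cancellation $2^{\delta-1}\cdot 2^{n-\delta}/2^{n-1}=1$, and evaluate $H_{n-1}-H_{\lfloor n/2\rfloor}=\ln 2+O(1/n)$. The range of $\delta$, the treatment of the $\delta=0$ term, and the absorption of the $O(2^{-r/2})$ errors are all handled correctly.
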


\subsection*{An analogy}
There is an analogy between polynomials of degree $n$ and\linebreak
$n$-digit numbers,
with irreducible polynomials corresponding to primes. 
A result analogous to Corollary~\ref{cor:density2} is:
the probability that a random $n$-digit number has a prime factor
exceeding $n/2$ digits is $\ln 2 + O(1/n)$ 
(see for example~\cite{Norton}).

\subsection*{The density of almost primitive polynomials}
The number of primitive polynomials of degree $n$ is
$P_n = \phi(2^n-1)/n$, where $\phi$ denotes Euler's phi function,
see for example 
Lidl~\cite{Lidl94}. 
If $I_r$ is replaced by $P_r$ in Theorem~\ref{thm:density}, then we
obtain the number of almost primitive polynomials with exponent~$r$
and increment~$\delta$. It is easy to deduce an analogue of 
Corollary~\ref{cor:density1}.
To obtain an analogue of Corollary~\ref{cor:density2} for almost primitive
polynomials we would need to estimate 
$\sum_{n/2 < r \le n}\phi(2^r-1)/(r2^r)$.
For $n = 1000$ the approximate value is 0.507.

\subsection*{Computational results for trinomials}

Let $N_{ait}(n)$ be the number of almost irreducible trinomials
$x^n + x^s + 1$ with $0 < s < n$. Consider the smoothed and
normalized value
$E_{ait}(n) = \frac{2}{n(n-1)}\sum_{m=2}^{n} N_{ait}(m)$. 
If a result like Corollary~\ref{cor:density2}
applies for {\em trinomials}, at least in the limit
as $r$, $\delta \to \infty$, then it is plausible to conjecture that
\begin{equation}
\lim_{n \to \infty}E_{ait}(n) = c 			      \label{eq:conj1}
\end{equation}
for some positive constant $c$.
We have computed $N_{ait}(n)$ and $E_{ait}(n)$ for $n \le 1000$;\linebreak
the numerical results support the conjecture (\ref{eq:conj1}) with
$c < \ln 2 \approx 0.6931$.
For example, $E_{ait}(500) \approx 0.4765$ and $E_{ait}(1000) \approx 0.4713$.
For almost primitive trinomials the corresponding limit seems
smaller (if it exists). Our computations give 
$E_{apt}(500) \approx 0.3124$ and $E_{apt}(1000) \approx 0.3104$.

\subsection*{Existence of almost irreducible/primitive trinomials}

We have shown by computation that an almost 
irreducible trinomial of
degree~$n$ exists for all\linebreak
$n \in [2,10000]$.  
Similarly, we have shown that an almost primitive trinomial of
degree~$n$ exists for all
$n \in [2,2000]\backslash\{12\}$.
In the exceptional case (degree~12),
$x^{12} + x + 1$ has primitive factors of degrees 3, 4, and~5, 
but degree~$5$ is too small,
so $x^{12} + x + 1$ is not ``almost primitive'' by Definition~\ref{def:almost}. 
The other candidate that is not excluded by Theorem~\ref{thm:gcdprim} is
$x^{12} + x^5 + 1$; this is irreducible but not primitive, 
having period $(2^{12}-1)/5$.

Rather than asking for an almost irreducible (or almost primitive) trinomial
of given degree, we can ask for one of given exponent.
This is close to the spirit of~\cite{Blake96,Tromp97}.
For all $r \in [2,2000]$ there is an almost irreducible trinomial
$x^{r+\delta} + x^s + 1$
with exponent $r$ and (minimal) increment $\delta = \delta_{ait}(r) \le 23$.
The extreme increment $\delta_{ait}(r) = 23$ 
occurs for $(r,s) = (1930,529)$,
and the mean value of $\delta_{ait}(r)$ 
for $r \in (1000,2000]$ is $\approx 2.14$.
A plausible conjecture is that $\delta_{ait}(r) = O(\log r)$.

Similarly, for all $r \in [2,712]$	
there is an almost primitive trinomial 	
with exponent $r$ and (minimal) increment $\delta_{apt}(r) \le 43$.
The extreme $\delta_{apt}(r) = 43$ 
occurs for $(r,s) = (544,47)$,
and the mean value of $\delta_{apt}(r)$ 
for $r \in (356,712]$ is $\approx 3.41$.

\pagebreak[3]

\end{document}